%
\documentclass{tac}

\hoffset -6mm 
\addtolength{\textwidth}{14mm}  
\voffset -5mm  
\addtolength{\textheight}{15mm}  

\usepackage{mathrsfs,amssymb,stmaryrd,bbm,mathtools, upgreek}
\usepackage{chngcntr}
\usepackage[shortlabels]{enumitem}
\makeatletter
\@addtoreset{equation}{section}
\makeatother
\numberwithin{equation}{subsection}

\newtheorem{theo}{Theorem}[section]
\newtheorem{lem}[theo]{Lemma}
\newtheorem{prop}[theo]{Proposition}
\newtheorem{coro}[theo]{Corollary}
\newtheorem{defi}[theo]{Definition}
\newtheoremrm{rem}[theo]{Remark}

\let\olddefi\defi
\renewcommand{\defi}{\olddefi\normalfont}
\let\oldrem\rem
\renewcommand{\rem}{\oldrem\upshape}

\def\mathrmdef#1{\expandafter\def\csname#1\endcsname{{\rm#1}}}
\def\mathsfdef#1{\expandafter\def\csname#1\endcsname{{\rm\sf#1}}}
\def\mathcaldef#1{\expandafter\def\csname#1\endcsname{{\mathcal#1}}}

\mathrmdef{op}\mathrmdef{co}\mathrmdef{coop}
\mathsfdef{CoAlg}   \mathsfdef{Cat}\mathsfdef{CAT}
\mathsfdef{Ord}
\mathsfdef{Set}

\newcommand\objj[1]{\mathrm{obj}\left( #1\right) }
\newcommand\projj[1]{\mathrm{p}_{#1} }
\newcommand\defeq{\colon =}
\def\prodd{\underline{\times} }

\newcommand\lan[1]{\mathrm{lan}_{#1} }

\def\initial{\mathsf{0} }
\def\terminal{\mathsf{1} }
\def\liva{\chi }
\def\livb{\gamma }
\def\livc{\zeta }
\def\livvv{\varphi }
\mathrmdef{id}

\usepackage{ifluatex}
\input diagxy        
\xyoption{curve}     
\xyoption{all}
\xyoption{2cell}

\begin{document}


\ifluatex
\directlua{adddednatlualoader = function ()
require = function (stem)
local fname = dednat6dir..stem..".lua"
package.loaded[stem] = package.loaded[stem] or dofile(fname) or fname
end
end}
\catcode`\^^J=10
\directlua{dofile "dednat6load.lua"}
\else
%
\def\diagxyto{\ifnextchar/{\toop}{\toop/>/}}
\def\to     {\rightarrow}
\def\defded#1#2{\expandafter\def\csname ded-#1\endcsname{#2}}
\def\ifdedundefined#1{\expandafter\ifx\csname ded-#1\endcsname\relax}
\def\ded#1{\ifdedundefined{#1}
    \errmessage{UNDEFINED DEDUCTION: #1}
  \else
    \csname ded-#1\endcsname
  \fi
}
\def\defdiag#1#2{\expandafter\def\csname diag-#1\endcsname{\bfig#2\efig}}
\def\defdiagprep#1#2#3{\expandafter\def\csname diag-#1\endcsname{{#2\bfig#3\efig}}}
\def\ifdiagundefined#1{\expandafter\ifx\csname diag-#1\endcsname\relax}
\def\diag#1{\ifdiagundefined{#1}
    \errmessage{UNDEFINED DIAGRAM: #1}
  \else
    \csname diag-#1\endcsname
  \fi
}
\newlength{\celllower}
\newlength{\lcelllower}
\def\cellfont{}
\def\lcellfont{}
\def\cell #1{\lower\celllower\hbox to 0pt{\hss\cellfont${#1}$\hss}}
\def\lcell#1{\lower\celllower\hbox to 0pt   {\lcellfont${#1}$\hss}}
\def\expr#1{\directlua{output(tostring(#1))}}
\def\eval#1{\directlua{#1}}
\def\pu{\directlua{pu()}}
%

\defdiag{pullbackcategoriesxx}{   
  \morphism(0,0)|a|/->/<750,0>[{P}`{W};{{j}}]
  \morphism(0,0)|b|/->/<0,-450>[{P}`{Z};{{h}}]
  \morphism(750,0)|r|/->/<0,-450>[{W}`{Y};{{f}}]
  \morphism(0,-450)|b|/->/<750,0>[{Z}`{Y};{{g}}]
}
\defdiag{pullbackcategoriesoverxtwocells}{   
  \morphism(0,0)|a|/->/<1050,0>[{d(t)}`{a\circ{j}(t)};{{(j,\livvv)}}]
  \morphism(0,0)|b|/->/<0,-450>[{d(t)}`{c\circ{j}(t)};{{\left(h,\livc\right)}}]
  \morphism(1050,0)|r|/->/<0,-450>[{a\circ{j}(t)}`{b\circ\left(f\circ{j}\right)(t)=b\circ\left(g\circ{h}\right)(t)};{{\left(f,\livb\right)}}]
  \morphism(0,-450)|b|/->/<1050,0>[{c\circ{j}(t)}`{b\circ\left(f\circ{j}\right)(t)=b\circ\left(g\circ{h}\right)(t)};{{\left(g,\liva\right)}}]
}
\defdiag{pullbackcategoriesoverx}{   
  \morphism(0,0)|a|/->/<750,0>[{(P,d)}`{(W,a)};{{(j,\livvv)}}]
  \morphism(0,0)|b|/->/<0,-450>[{(P,d)}`{(Z,c)};{{\left(h,\livc\right)}}]
  \morphism(750,0)|r|/->/<0,-450>[{(W,a)}`{(Y,b)};{{\left(f,\livb\right)}}]
  \morphism(0,-450)|b|/->/<750,0>[{(Z,c)}`{(Y,b)};{{\left(g,\liva\right)}}]
}
\defdiag{coequalizercategoriesoverxxx}{   
  \morphism(0,0)|b|/{@{->}@/_15pt/}/<750,0>[{(W,a)}`{(Y,b)};{{(g,\liva)}}]
  \morphism(750,0)|r|/->/<525,0>[{(Y,b)}`{(C,d)};{{(j,\livvv)}}]
  \morphism(0,0)|a|/{@{->}@/^15pt/}/<750,0>[{(W,a)}`{(Y,b)};{{(f,\livb)}}]
}
\defdiag{coequalizerofLan}{   
  \morphism(0,0)/{@{->}@/_15pt/}/<750,0>[{\lan{jf}a=\lan{jg}a}`{\lan{j}b};]
  \morphism(750,0)|a|/->/<300,0>[{\lan{j}b}`{d};{{{\livvv}^t}}]
  \morphism(0,0)/{@{->}@/^15pt/}/<750,0>[{\lan{jf}a=\lan{jg}a}`{\lan{j}b};]
}
\defdiag{paralellarrows}{   
  \morphism(0,0)|a|/->/<750,0>[{\lan{jg}a}`{\lan{jg}(b\circ{g})};{{\lan{jg}\livb}}]
  \morphism(750,0)/->/<600,0>[{\lan{jg}(b\circ{g})}`{\lan{j}b};]
  \morphism(0,-150)|b|/->/<750,0>[{\lan{jf}a}`{\lan{jf}(b\circ{f})};{{\lan{jf}\liva}}]
  \morphism(750,-150)/->/<600,0>[{\lan{jf}(b\circ{f})}`{\lan{j}b};]
}
\defdiag{coequalizercategories}{   
  \morphism(0,0)|b|/{@{->}@/_15pt/}/<750,0>[{W}`{Y};{{g}}]
  \morphism(750,0)|r|/->/<300,0>[{Y}`{C};{{j}}]
  \morphism(0,0)|a|/{@{->}@/^15pt/}/<750,0>[{W}`{Y};{{f}}]
}
\defdiag{pullbackcategories}{   
  \morphism(0,0)|a|/->/<750,0>[{P}`{W};{{j}}]
  \morphism(0,0)|b|/->/<0,-450>[{P}`{Z};{{h}}]
  \morphism(750,0)|r|/->/<0,-450>[{W}`{Y};{{f}}]
  \morphism(0,-450)|b|/->/<750,0>[{Z}`{Y};{{g}}]
}
\defdiag{pullbackobstruction}{   
  \morphism(0,0)/->/<750,0>[{V(y)}`{n};]
  \morphism(0,0)/->/<0,-450>[{V(y)}`{V(e)};]
  \morphism(750,0)/->/<0,-450>[{n}`{V(b)};]
  \morphism(0,-450)|b|/->/<750,0>[{V(e)}`{V(b)};{{V(q)}}]
}
\defdiag{pullbackobstructionL}{   
  \morphism(0,0)/->/<1350,0>[{L(Z)=(Z,\underline{\initial})}`{(W,d)};]
  \morphism(0,0)/->/<0,-450>[{L(Z)=(Z,\underline{\initial})}`{L(E)=(E,\underline{\initial})};]
  \morphism(1350,0)|r|/->/<0,-450>[{(W,d)}`{L(B)=(B,\underline{\initial})};{{(f,{\livb})}}]
  \morphism(0,-450)|b|/->/<1350,0>[{L(E)=(E,\underline{\initial})}`{L(B)=(B,\underline{\initial})};{{L(p)}}]
}
\defdiag{fibrationcat}{   
  \morphism(0,0)|a|/->/<0,-300>[{\sum{\Cat\left[{-,X}\right]}}`{{\Cat}//X};{\cong}]
  \morphism(0,0)|a|/->/<750,-150>[{\sum{\Cat\left[{-,X}\right]}}`{\Cat};{\mathsf{U}}]
  \morphism(0,-300)|b|/->/<750,150>[{{\Cat}//X}`{\Cat};{U}]
}
\defdiag{laxmorphismofcoalgebras1cell}{   
  \morphism(0,0)|a|/->/<300,0>[{W}`{Y};{f}]
}
\defdiag{twocelloflaxcommamorphism}{   
  \morphism(0,0)|a|/->/<600,0>[{W}`{Y};{f}]
  \morphism(0,0)|l|/->/<300,-420>[{W}`{X};{a}]
  \morphism(600,0)|r|/->/<-300,-420>[{Y}`{X};{b}]
  \morphism(128,-0)|a|/{@{=>}@<-20pt>}/<345,0>[{\phantom{O}}`{\phantom{O}};{\livb}]
}
\defdiag{compositionofmorphismslaxcommatwocategories}{   
  \morphism(0,0)|a|/->/<675,0>[{W}`{Y};{f}]
  \morphism(675,0)|a|/->/<675,0>[{Y}`{Z};{g}]
  \morphism(0,0)|l|/->/<675,-600>[{W}`{X};{a}]
  \morphism(1350,0)|r|/->/<-675,-600>[{Z}`{X};{c}]
  \morphism(675,0)|m|/->/<0,-600>[{Y}`{X};{b}]
  \morphism(240,-150)|a|/=>/<345,0>[{\phantom{O}}`{\phantom{O}};{\livb}]
  \morphism(765,-150)|a|/=>/<345,0>[{\phantom{O}}`{\phantom{O}};{\liva}]
}
\defdiag{leftsideequationtwocellforlaxcommacategor}{   
  \morphism(0,0)|a|/{@{->}@/^18pt/}/<750,0>[{W}`{Y};{f'}]
  \morphism(0,0)|b|/{@{->}@/_18pt/}/<750,0>[{W}`{Y};{f}]
  \morphism(0,0)|l|/->/<0,-675>[{W}`{X};{a}]
  \morphism(750,0)|r|/->/<-750,-675>[{Y}`{X};{b}]
  \morphism(375,150)|l|/<=/<0,-300>[{\phantom{O}}`{\phantom{O}};{\livc}]
  \morphism(-8,-338)|a|/=>/<390,0>[{\phantom{O}}`{\phantom{O}};{\livb}]
}
\defdiag{rightsideequationtwocellforlaxcommacategory}{   
  \morphism(0,0)|a|/->/<690,0>[{W}`{Y};{f'}]
  \morphism(0,0)|l|/->/<0,-645>[{W}`{X};{a}]
  \morphism(690,0)|r|/->/<-690,-645>[{Y}`{X};{b}]
  \morphism(22,-322)|a|/{@{=>}@<15pt>}/<375,0>[{\phantom{O}}`{\phantom{O}};{\livb'}]
}
\defdiag{equalizerdiagram}{   
  \morphism(0,0)|a|/{@{->}@<7pt>}/<1500,0>[{\displaystyle\prod_{w\in\objj{W}}{T(w,w)}}`{\displaystyle\prod_{(w,y)\in\objj{W\times{W}}}{\prod_{h\in{W(w,y)}}{T(w,y)}}};{t_0}]
  \morphism(0,0)|b|/{@{->}@<-7pt>}/<1500,0>[{\displaystyle\prod_{w\in\objj{W}}{T(w,w)}}`{\displaystyle\prod_{(w,y)\in\objj{W\times{W}}}{\prod_{h\in{W(w,y)}}{T(w,y)}}};{t_1}]
}

\def\pu{}
\fi	



\title{Lax comma categories: cartesian closedness, extensivity, topologicity,  and descent}
\author{Maria Manuel Clementino, Fernando Lucatelli Nunes and Rui Prezado}
\address{(1,2,3): University of Coimbra, CMUC, Department of Mathematics, 3000-143 Coimbra, Portugal\\(2): Department of Information and Computing Sciences, Utrecht University, The Netherlands}
\eaddress{mmc@mat.uc.pt, f.lucatellinunes@uu.nl, and ruiprezado@gmail.com}
\amsclass{18N10, 18N15, 18A05, 18A22, 18A40}

\keywords{lax comma categories, Grothendieck descent theory, Galois theory, $2$-dimensional category theory,
topological functor, effective descent morphism, cartesian closed category, exponentiability}

\maketitle

\begin{abstract}
We investigate the properties of lax comma categories over a base category $X$,
focusing on topologicity, extensivity, cartesian closedness, and descent. We establish that
the forgetful functor from $\Cat //X$ to $\Cat$ is topological if and only if
$X$ is large-complete. Moreover, we provide conditions for $\Cat //X $ to be complete,
cocomplete, extensive and cartesian closed. We analyze descent in $\Cat //X$ and
identify necessary conditions for effective descent morphisms. Our findings
contribute to the literature on lax comma categories and provide a foundation
for further research in $2$-dimensional Janelidze's Galois theory.
\end{abstract}

\setcounter{secnumdepth}{-1}

%
\pu

%
\pu

%
\pu

%
\pu

%
\pu	

%
\pu

%
\pu

%
\pu

%
\pu

%
\pu

%
\pu

%
\pu	
%
\pu

%
\pu	

%
\pu	
%
\pu

%
\pu

\section{Introduction}\label{section:introduction}

The ubiquitous notion of comma category has a natural $2$-dimensional lax
notion, called lax comma $2$-category (see, for instance,
\cite[Ch.~I,5]{zbMATH03447118}). The first two authors' motivation for investigating this notion stems from the fundamental role it plays in our approach to studying $2$-dimensional counterparts
of Janelidze's Galois theory~\cite{2020arXiv200203132C}, particularly regarding its interplay with lax orthogonal factorization systems~\cite{zbMATH06631665}.

While various remarkable insights exist in the literature (see, for instance, \cite{Fol70, Gui73, 2022arXiv221213541C, PT20}), we identified a gap in the systematic exploration of foundational properties crucial for advancing our research in the direction suggested in \cite{2020arXiv200203132C}.

The present paper builds upon our prior examination of lax comma ordered sets~\cite{2022arXiv221213541C}, extending the scope to encompass lax comma categories $\Cat // X $, where $\Cat$ denotes the category of small categories and $X$ represents a (possibly large) category. Our primary objective is to lay down the groundwork for our ongoing work in Galois theory and descent theory initiated in \cite{2020arXiv200203132C}. In pursuit of this aim, we concentrate on four fundamental aspects: cartesian closedness, extensivity, topologicity, and descent.

In Section \ref{section:basic-properties}, we start by giving conditions under
which the category $\Cat //X $ is cartesian closed, extensive and (co)complete,
showing that the properties of the base category $X$ play a crucial role in
determining the properties of $\Cat//X $. In this direction, we also establish
that, if $X$ is small, the forgetful functor $\Cat // X\to \Cat$ is topological
if and only if $X$ is complete (which is equivalent to say that $X$ is a
complete lattice). 

The study and characterization of effective descent morphisms have a rich and
intricate history. The work of Reiterman-Tholen~\cite{zbMATH00567702} in the
realm of topological spaces,  reformulated by
Clementino-Hofmann~\cite{zbMATH01766377}, exemplifies the depth and complexity
of this classification problem. Several notable contributions have significantly
advanced our understanding of effective descent morphisms in categories of
categorical structures. Notably, the study of effective descent morphisms in the
categories of internal
categories~\cite{PhDThesisIvanLeCreurerLouvainlaNeuve1999}, the investigation
into categories of enriched categories (see, for instance,
\cite{zbMATH06760394}, \cite[Theorem~9.11]{2016arXiv160604999L} and
\cite{2023arXiv230504042P}), and the study of effective descent morphisms in
categories of generalized multicategories (see, for instance,
\cite{zbMATH02153870, zbMATH07648507, 2023arXiv230908084P}) have each imparted
pivotal insights. In Section \ref{section:descent}, we initiate our
investigation into effective descent morphisms in $\Cat//X$. In this direction,
we show that the forgetful functor $\Cat // X\to \Cat$ preserves effective
descent morphisms. Despite the seemingly basic nature of $\Cat // X $, we unveil
that the full characterization of effective descent morphisms in this category
remains an open challenge. This work serves as the inception of our exploration
into this descent aspect of $\Cat // X $.

Our investigation on basic categorical properties of $\Cat // X$ presented
herein sheds light on the general properties of lax comma categories and lays
the foundation for our future work in generalized aspects of lax comma objects
in $2$-categories, particularly in the context of $2$-dimensional
Janelidze-Galois theory. In Section \ref{section:twodimensional-observations},
we give some further comments and point to future work.

This paper serves as a formal exposition of our findings,  contributing to the
basic literature on lax comma categories and lax comma $2$-categories in
general.

\paragraph{Acknowledgements}
We thank the anonymous referee for their insightful and prompt feedback, which included valuable remarks, suggestions, and additional references.

The authors acknowledge partial financial support by {\it Centro de Matemática
da Universidade de Coimbra} (CMUC), funded by the Portuguese Government through
FCT/MCTES, DOI 10.54499/UIDB/00324/2020.

This research was supported through the programme ``Oberwolfach Leibniz
Fellows'' by the Mathematisches Forschungsinstitut Oberwolfach in 2022.

\setcounter{secnumdepth}{5}
\section{Preliminaries}\label{section:preliminaries}
We denote by $\Cat $ the $2$-category of small categories, functors and natural transformations. In this section, we recall the explicit definition of lax comma categories $\Cat // X $. For additional general fundamental aspects, we recommend consulting \cite[I,5]{zbMATH03447118} and \cite{2020arXiv200203132C}.

\begin{defi}\label{definitionoflaxcommacategory}
Given a (possibly large) category $X$, we denote by
$\Cat // X  $ the category defined by the following.
\begin{itemize}
\renewcommand\labelitemi{--}	
\item The objects are pairs $ (W, a) $ in which $W$ is a small category
and $a: W\to X $ is a functor.
\item A morphism in $\Cat // X  $  between objects $(W,  a) $ and $ (Y, b) $    is a pair
$$\left( \diag{laxmorphismofcoalgebras1cell}, \diag{twocelloflaxcommamorphism}        \right) $$
in which
$f: W\to X $ is a functor and  $\livb $ is a natural transformation. 
		
If $(f, \livb ): (W,  a)\to (Y,  b)  $ and $ (g, \liva ): (Y, b)\to (Z, c) $ are morphisms  of $\Cat // X $, the composition is defined by
$\left( g\circ f,  \left(\liva \ast \id _f\right)\cdot \livb \right) $, that is to say,
the composition of the morphisms $g$ and $f$ with the pasting
$$\diag{compositionofmorphismslaxcommatwocategories} $$
of the natural transformations $\liva $ and $\livb $. Finally, with the definitions above, the identity on the object $(W,a)$ is the morphism $(\id _W, \id _a ) $.
\end{itemize}
Herein, the category $\Cat // X$ is called the \textit{lax comma category} over $X$. 
\end{defi}

A morphism $(f, \livb )$ in $\Cat // X $ is called \textit{strict} if $\livb $ is the identity. It should be noted that the comma category $\Cat / X $ is the subcategory of $\Cat // X $ consisting of the same objects but only the strict morphisms.

While the category $\Cat // X $ can be endowed with a natural two-dimensional structure, our current investigation primarily centers on its foundational one-categorical structure. We briefly touch upon the two-dimensional aspects in Section \ref{section:twodimensional-observations}. For the reader interested in two-dimensional features, we refer to \cite{2020arXiv200203132C, zbMATH07558575}. Further investigation of two-dimensional aspects is reserved for future research endeavors.

\subsection{Lax comma categories as total spaces}
We recall that lax comma categories are fibred over $\Cat $. This fibred structure over $\Cat$ is pivotal as it enables us to derive several essential properties and insights concerning limits, colimits, extensivity, topologicity, among other properties.
We refer to \cite{zbMATH03305157}, \cite[A1.1.7 and B1.3.1]{zbMATH01840601}, \cite{lucatellivakar2024} and \cite[Section~6]{lucatellivakar2023} for basic aspects on fibrations and Grothendieck constructions.

Given a (possibly large) category $X$, the forgetful functor defined in \eqref{eq:fibrationoverX} is a fibration. 
\begin{equation}\label{eq:fibrationoverX}
	U :  \Cat // X  \to \Cat , \qquad  (W, a)  \mapsto W , \qquad
	 (f, \livb )     \mapsto f .
\end{equation}
Denoting by  $\Cat\left[ -, X \right] :  \Cat ^\op \to \CAT $ 
the functor that takes every small category $W$ to the category $\Cat\left[ W, X \right]$ of functors and natural transformations, we have that
$$\diag{fibrationcat}$$
commutes, where we denote by $\displaystyle \sum \Cat\left[ -, X \right] $ the Grothendieck construction of the $2$-functor $\Cat\left[ -, X \right] $, and $\mathsf{U} $ is the associated fibration (forgetful functor).

\subsection{Adjoints}\label{subsection:adjoints}
Clearly, if $X$ has an initial object $\initial $, then \eqref{eq:fibrationoverX} has a left adjoint; namely
\begin{equation}\label{eq:fibrationoverX-leftadjoint}
	L :   \Cat \to  \Cat // X , \qquad  W \mapsto \underline{\initial} , \qquad
	(f:W\to Y)    \mapsto \left( f , \iota \right) 
\end{equation}
where $\underline{\initial} $ denotes the functor constantly equal to the initial
object, and $\iota : \underline{\initial}\rightarrow \underline{\initial}\circ f
$ is the only natural transformation.  Dually, \eqref{eq:fibrationoverX}
has a right adjoint if $X$ has a terminal object.


\section{Basic properties}\label{section:basic-properties}
In this section, we study cartesian closedness, extensivity and topologicity of lax comma categories.  As 
a foundational step in our study, we also provide an explicit construction of pullbacks, equalizers and coequalizers in lax comma categories. 

A fundamental underpinning for our investigation of limits and colimits in lax comma categories lies in the pivotal observation that the forgetful functor $U: \Cat // X \to \Cat$ is a fibration. For a more nuanced understanding of limits and colimits in the context of fibred categories, we recommend consulting \cite{zbMATH03305157}, \cite[Section~6]{lucatellivakar2023}, and \cite{lucatellivakar2024}.

\subsection{Cartesian closedness}
We start by studying the cartesian structure of the category $\Cat // X $. We
note that, whenever $X$ has products, for each small category $W$ and any
functor $g: W\to Y $, $\Cat\left[ W, X \right]$ has products, and the
change-of-base functors $\Cat\left[ g, X\right]$ preserve products. 

Since $\sum \Cat\left[ -, X \right]\cong \Cat // X $, by general results on
products of fibred categories, the observation above allows us to conclude
Propositions \ref{prop:terminal} and \ref{prop:products} (see, for instance,
\cite{zbMATH03305157, lucatellivakar2024} and
\cite[Section~6]{lucatellivakar2023} for further details). 
\begin{prop}\label{prop:terminal}
	If $X$ has a terminal object, then so does $\Cat // X $. 
	
	More precisely,  the terminal object of $\Cat // X$ is given by $\left(\terminal, \overline{\terminal } \right) $ where $\terminal $ is the terminal category, and  $\overline{\terminal } $ denotes the functor $\overline{\terminal } : \terminal \to X $ whose image is the terminal object in $X$.
\end{prop}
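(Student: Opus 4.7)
The plan is to verify that $(\terminal, \overline{\terminal})$ satisfies the universal property of a terminal object in $\Cat // X$ by directly unwinding Definition \ref{definitionoflaxcommacategory}. Given an arbitrary object $(W,a)$, a morphism $(W,a) \to (\terminal, \overline{\terminal})$ consists of a functor $f : W \to \terminal$ together with a natural transformation $\livb : a \Rightarrow \overline{\terminal}\circ f$. Since $\terminal$ is the terminal object of $\Cat$, there is a unique choice of $f$, namely the constant functor, and consequently $\overline{\terminal}\circ f$ is the constant functor at the terminal object $\top$ of $X$.

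Next, I would observe that a natural transformation from $a$ into the constant functor at $\top$ is determined componentwise by morphisms $a(w) \to \top$ in $X$, each of which is uniquely determined by the universal property of $\top$; naturality is then automatic, since any two parallel morphisms into a terminal object agree. Therefore $(f,\livb)$ is uniquely determined, establishing the universal property.

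Alternatively, one may appeal to the fibrational description recalled in the paragraph preceding the proposition: via the isomorphism $\sum \Cat[-,X]\cong \Cat // X$, the forgetful $U : \Cat // X \to \Cat$ is a fibration, and by the standard result on terminal objects in total spaces of fibrations, it suffices to exhibit a terminal object in the base $\Cat$ (which is $\terminal$) and a terminal object in the fiber over it (namely in $\Cat[\terminal, X] \cong X$). Under the assumption that $X$ has a terminal object $\top$, the latter corresponds to $\overline{\terminal}$, giving the claimed description.

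There is no real obstacle here; the proof is a direct verification. The only mild point worth articulating cleanly is that the natural transformation component $\livb$ is \emph{forced} (rather than additional data) because its codomain takes constant value $\top$, which is precisely where the lax comma structure collapses to the trivial one when mapping into the terminal fiber.
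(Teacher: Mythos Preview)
Your proposal is correct. The paper does not give a separate proof of this proposition; it simply observes, just before stating it, that the isomorphism $\Cat//X\cong\sum\Cat[-,X]$ together with the general results on products in fibred categories yields Propositions~\ref{prop:terminal} and~\ref{prop:products}. This is precisely your ``alternative'' argument. Your direct verification via Definition~\ref{definitionoflaxcommacategory} is a more elementary route that the paper does not spell out; it has the advantage of being self-contained and of making explicit why the lax $2$-cell datum $\livb$ is forced, whereas the fibrational argument has the advantage of treating the terminal object and all finite products uniformly in one stroke.
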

\begin{prop}\label{prop:products}
If $X$ has binary products then so does $\Cat// X $. 

More precisely, if $(W, a), (Y, b) $ are objects of $\Cat // X $, 
then the object $ (W\times Y, a\prodd b) $, where $a\prodd b (w,y) = a(w)\times b(y) $, is the product
$(W, a)\times (Y, b) $ in $\Cat // X $.

Furthermore, if $X$ has products, so does $\Cat // X$. More explicitly, if $\left( W_i, a_i \right) _{i\in L} $ is a family of objects in $\Cat // X$, 
$$ \prod _{i\in L} \left( W_i, a_i \right) \cong \left( \prod _{i\in L} W_i, \underline{a}  \right),   $$ 
where $\displaystyle\underline{a}  \left( x_i \right) _{i\in L} = \prod _{i\in L} a_i (x_i) $.
\end{prop}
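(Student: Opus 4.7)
The plan is to exploit the isomorphism $\displaystyle \sum \Cat\left[ -, X \right]\cong \Cat // X $ and the fibred structure of $U$ over $\Cat$. It is a general fact about fibrations (or Grothendieck constructions of indexed categories) that the total category has (binary / small) products whenever the base has them, each fibre has them, and the reindexing functors preserve them. This is exactly the sentence immediately preceding the statement, so my strategy is to verify these three ingredients and then unwind the resulting construction to obtain the explicit formula $(W\times Y, a\prodd b)$, respectively $\left( \prod _{i\in L} W_i, \underline{a}\right)$.

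First, $\Cat$ has binary products and all small products, which is classical. Next, for a fixed small category $W$, the fibre $\Cat\left[ W, X \right]$ inherits products from $X$ computed pointwise: given a family $\left( a_i\right)_{i\in L}$ of functors $W\to X$, the pointwise product $w\mapsto \prod _{i\in L} a_i(w)$ together with its componentwise projections and universal property defines the product in $\Cat\left[ W, X \right]$. Third, for any functor $g: W\to Y$ the reindexing functor $\Cat\left[ g, X \right]$ sends $b$ to $b\circ g$; since the products in each fibre are pointwise, we have $\left(\prod _{i\in L} b_i\right)\circ g = \prod _{i\in L}\left( b_i\circ g\right)$ on the nose, so reindexing preserves products.

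Putting these ingredients together via the standard fibration argument, the product of a family $\left(W_i, a_i\right)_{i\in L}$ is obtained by first taking the product $\prod _{i\in L} W_i$ in the base, then pulling each $a_i$ back along the projection $\projj{i}: \prod _{j\in L} W_j\to W_i$ into the common fibre $\Cat\left[ \prod _{j\in L} W_j, X \right]$, and finally computing their product there. The pullback of $a_i$ is $a_i\circ \projj{i}$, and the pointwise product of this family evaluated at $\left( x_j\right) _{j\in L}$ is $\prod _{i\in L} a_i(x_i)$, which is precisely $\underline{a}\left( x_j\right)_{j\in L}$. The projection morphisms are the strict morphisms $\left( \projj{i}, \id \right): \left(\prod _{j\in L} W_j, \underline{a}\right) \to \left( W_i, a_i\right)$.

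The main point to check, and the only step with any actual content, is the universal property: given morphisms $\left( f_i, \livb _i\right): (P, d)\to \left( W_i, a_i\right)$, the unique mediating morphism has underlying functor $\left\langle f_i\right\rangle _{i\in L}: P\to \prod _{i\in L} W_i$, supplied by the universal property of products in $\Cat$, and natural transformation $\left\langle \livb _i\right\rangle : d\to \underline{a}\circ \left\langle f_i\right\rangle _{i\in L} = \prod _{i\in L}\left( a_i\circ f_i\right)$, whose component at $t\in P$ is given by the universal property of $\prod _{i\in L} a_i(f_i(t))$ in $X$ applied to the family $\left(\livb _i\right) _t$. Uniqueness in both coordinates is then immediate from the corresponding uniqueness statements in $\Cat$ and in $X$. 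The binary case is the particular case $L=\{1,2\}$. I do not foresee any serious obstacle here: size issues do not intervene since we fix a small indexing set $L$ and small categories $W_i$, so $\prod _{i\in L} W_i$ is again small.
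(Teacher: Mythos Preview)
Your approach is exactly the paper's: the authors state the proposition as an immediate consequence of the general result on products in total categories of fibrations, having already recorded that the fibres $\Cat[W,X]$ have pointwise products preserved by change-of-base. Your write-up simply spells out what they leave to the references, and the verification of the universal property is correct.

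There is, however, one slip. The projection morphisms are \emph{not} strict: the second component of the projection $\left(\prod_{j\in L} W_j,\,\underline{a}\right)\to (W_i,a_i)$ is the natural transformation whose component at $(x_j)_{j\in L}$ is the product projection $\prod_{j\in L} a_j(x_j)\to a_i(x_i)$ in $X$, not the identity. Writing $(\projj{i},\id)$ does not typecheck, since $\underline{a}\neq a_i\circ\projj{i}$ in general. This matters for the composition check: with the correct projection $2$-cell $\pi_i$, the composite $(\projj{i},\pi_i)\circ(\langle f_j\rangle,\langle\livb_j\rangle)$ has second component $(\pi_i\ast\id)\cdot\langle\livb_j\rangle=\livb_i$, as required; with $\id$ in place of $\pi_i$ the computation would not even make sense. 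Once this is fixed the argument goes through unchanged.
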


Being a natural generalization of the setting of \cite{2022arXiv221213541C}, the study of exponentials  fits the setting of \cite{lucatellivakar2024}.
However, we stick to an explicit presentation of the exponentials. More precisely, to streamline the computation of exponentials in $\Cat // X$, we employ the concept of ends, as outlined in \cite[3.10]{zbMATH03751225} and recalled below. 

Assuming that $X$ is cartesian closed and $x,y$ are objects of $X$, we denote by $$\projj{x}: x\times y \to x $$ the projection, and by $x\Rightarrow y $ the exponential; that is to say, 
$$ x\Rightarrow - : X  \to X $$
is the functor right adjoint to $ x\times - : X\to X $.

 It is worth noting that a \textit{complete category inherently possesses all ($\Set$-enriched/ordinary) ends}. Specifically, for any small category $W$ and any functor $$T: W^\text{op} \times W \to X,$$ the end $\displaystyle \int_{W} T   = \int_{w \in W} T (w,w) $ is given by the equalizer of the following diagram:
$$ \diag{equalizerdiagram}$$
where, for the component corresponding to $(w,y,h)\in \objj{W\times W}\times W(w,y)$, we have that $t_0 $  and $t_1 $  are respectively induced by
$ T(w,h)\cdot \projj{T(w,w)}$ and $ T(h,y)\cdot \projj{T(y,y)}$.

\begin{theo}
	Let $X$ be a cartesian closed category. We assume that $(W,a), (Y,b)$ are objects in $\Cat // X $
	such that \eqref{eq:end-btothea} exists in $X$ for any functor $h: W\to Y $. 
	In this setting, the exponential $(W, a)\Rightarrow (Y, b) $ in $\Cat//X$ exists and is given by $\left( \Cat \left[ W, Y\right], b ^a    \right)$.
\begin{equation}\label{eq:end-btothea}
		b^a (h) \defeq\int _{w\in W} \left( a(w)\Rightarrow b\cdot h (w)\right)
\end{equation}
Consequently, if $X$ is a complete cartesian closed category, so is $\Cat // X $.
\end{theo}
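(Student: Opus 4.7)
The proof will transfer the standard cartesian closed structure of $\Cat$ along the fibration $U\colon \Cat // X\to\Cat$, using the end formula to package the $2$-cell data. First I would verify that $b^a\colon \Cat[W,Y]\to X$ as defined in \eqref{eq:end-btothea} is a functor: given a natural transformation $\alpha\colon h\Rightarrow h'$ in $\Cat[W,Y]$, the wedge defining $b^a(h')$ composes with $b\cdot\alpha$ to give a wedge factoring through $b^a(h)$, by the universal property of the end; functoriality in $h$ is then a routine end-manipulation. This uses only that the relevant ends exist in $X$.

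Next, I would establish the universal property by constructing a bijection, natural in $(Z,c)$, between morphisms $(Z,c)\to(\Cat[W,Y], b^a)$ and morphisms $(Z,c)\times(W,a)\to(Y,b)$ in $\Cat//X$. On underlying functors, cartesian closedness of $\Cat$ gives the standard bijection between $f\colon Z\to \Cat[W,Y]$ and $\hat f\colon Z\times W\to Y$. The key step is the correspondence on $2$-cells: a natural transformation $\gamma\colon c\Rightarrow b^a\cdot f$ has components $\gamma_z\colon c(z)\to \int_{w}(a(w)\Rightarrow b\cdot \hat f(z,w))$, which by the universal property of the end and cartesian closedness of $X$ correspond to families of maps $c(z)\times a(w)\to b(\hat f(z,w))$ dinatural in $w$. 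A diagram chase shows that naturality of $\gamma$ in $z$ together with dinaturality in $w$ is equivalent to naturality of the resulting family $c\prodd a\Rightarrow b\cdot \hat f$ in $(z,w)\in Z\times W$. Thus we obtain the required bijection, and checking naturality in $(Z,c)$ reduces to unwinding the definitions.

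For the final clause, if $X$ is complete cartesian closed, then all ends in $X$ exist by the equalizer presentation recalled before the statement, so the exponentials constructed above exist for every pair of objects; combined with Proposition~\ref{prop:terminal} and Proposition~\ref{prop:products} we get cartesian closedness. Completeness of $\Cat//X$ then follows from the fact that $U$ is a fibration whose base $\Cat$ is complete and whose fibres $\Cat[W,X]$ are complete (as functor categories into a complete $X$) with change-of-base preserving limits, so limits are computed fibrewise over the limit in $\Cat$.

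\textbf{Main obstacle.} The delicate point is matching the dinaturality of the family $\{c(z)\times a(w)\to b(\hat f(z,w))\}_{w\in W}$ arising from the end with the ordinary naturality of a $2$-cell $c\prodd a\Rightarrow b\cdot\hat f$ on $Z\times W$. I expect that separating a general morphism $(z,h)\colon(z,w)\to(z',w')$ in $Z\times W$ as $(z,h)=(\id_{z'},h)\circ(\id_{Z\times w})\circ\cdots$ reduces the naturality square to two independent pieces, one encoded by naturality of $\gamma$ in $z$ and the other by dinaturality in $w$; but care is needed because the two pieces interact through the twisted variance of the hom $a(-)\Rightarrow b(-)$.
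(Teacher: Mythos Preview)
Your proposal is correct and follows essentially the same strategy as the paper: establish the natural bijection of hom-sets by combining cartesian closedness of $\Cat$ on the underlying-functor component with the end formula on the $2$-cell component. The paper's execution is a bit more streamlined than the diagram chase you anticipate. Instead of unpacking dinaturality by hand, it uses the standard identity $\Cat[A,X](F,G)\cong\int_{a\in A}X(F(a),G(a))$ to rewrite every natural-transformation set as an end; the passage from naturality in $(z,w)\in Z\times W$ to naturality-in-$z$ together with dinaturality-in-$w$ then becomes a formal application of Fubini $\int_{(z,w)}\cong\int_{z}\int_{w}$ plus the fact that $X(c(z),-)$ preserves ends. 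In other words, the ``delicate point'' you flag as the main obstacle dissolves once the argument is phrased entirely in the end calculus, and no explicit factorisation of morphisms in $Z\times W$ is needed.
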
 
\begin{proof}
	Indeed, we have that:
\begin{eqnarray*}
	\Cat // X \left(  (W,a)\times (Z,c), (Y, b) \right) & \cong & \Cat // X \left(  (W\times Z, a\prodd c), (Y, b) \right) \\ 
	& \cong & \coprod _{f\in \Cat \left( W\times Z, Y\right) } \Cat \left[ W\times Z, X\right]  \left( a\prodd c , b\cdot f \right) \\
	& \cong & \coprod _{f\in \Cat \left( W\times Z, Y\right) } \int _{(w,z)\in W\times Z} X \left( a(w)\times c(z), b(f(w,z)) \right)  \\
	& \cong & \coprod _{f\in \Cat \left( W\times Z, Y\right) } \int _{z\in Z}\int _{w\in W} X \left( c(z), a(w)\Rightarrow b(f(w,z)) \right) \\
	& \cong & \coprod _{\dot{f}\in \Cat \left( Z, \Cat\left[ W, Y\right]\right) }  \int _{z\in Z} X\left( c(z), \int _{w\in W}\left( a(w)\Rightarrow b(\dot{f}(z)(w))\right) \right) \\
		& \cong & \coprod _{\dot{f}\in \Cat \left( Z, \Cat\left[ W, Y\right] \right) }  \int _{z\in Z} X\left( c(z), b^a \left( \dot{f} (z) \right) \right) \\
& \cong & \coprod _{\dot{f}\in \Cat \left( Z, \Cat\left[ W, Y\right]\right) }  \Cat \left[ Z, X\right]  \left( c(z), b^a \left( \dot{f} (z) \right) \right) \\
& \cong & \Cat // X \left( \left( Z,c \right), \left( \Cat \left[ W, Y\right], b ^a  \right)  \right) 
\end{eqnarray*}	
\end{proof}

\subsection{Pullbacks}
We assume that \eqref{eq:pullbackcategories}  is a pullback in $\Cat $.
\begin{equation}\label{eq:pullbackcategories} 
\diag{pullbackcategories}
\end{equation}
Since the change-of-base functors of the indexed category $\Cat \left[ -, X\right]$ preserve 
limits (and, in particular, pullbacks), we have that:
\begin{theo} 
If $X$ is a category with pullbacks, then $\Cat // X $ has pullbacks. Explicitly, 
\eqref{eq:pullbackcategoriesoverx}  is a pullback in $\Cat // X $ provided that
\eqref{eq:pullbackcategories}  is a pullback in $\Cat $ and \eqref{eq:pullbackcategoriestwocells}  
is a pullback for any object $t$ in $P$.
\\
\noindent\begin{minipage}{.6\linewidth}
\begin{equation}\label{eq:pullbackcategoriestwocells} 
	\diag{pullbackcategoriesoverxtwocells}
\end{equation}
\end{minipage}%
\begin{minipage}{.4\linewidth}
\begin{equation}\label{eq:pullbackcategoriesoverx} 
	\diag{pullbackcategoriesoverx}
\end{equation}
\end{minipage}
\end{theo}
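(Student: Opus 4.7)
The plan is to exploit the fibrational description $\Cat//X \cong \sum \Cat[-,X]$ recorded earlier, together with the standard result that a Grothendieck fibration creates those limits which exist in its base and, fiberwise, in each fiber, provided reindexing functors preserve them (see the references \cite{zbMATH03305157, lucatellivakar2024} and \cite[Section~6]{lucatellivakar2023} cited in the paper).

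First I would verify the two inputs to this general principle. The base $\Cat$ obviously has pullbacks, so the square \eqref{eq:pullbackcategories} makes sense. For any small category $P$, the fiber $\Cat[P,X]$ has pullbacks computed objectwise in $X$: given three functors $P \to X$ and natural transformations between them, the objectwise pullback in $X$ assembles into a functor $d: P \to X$ since pullbacks in $X$ are functorial in their data, and the induced square is a pullback in $\Cat[P,X]$ because limits in a functor category are pointwise. Moreover, for any functor $j: P \to W$, the reindexing functor $\Cat[j,X]: \Cat[W,X] \to \Cat[P,X]$ is precomposition with $j$, and since pullbacks in these functor categories are pointwise, precomposition preserves them.

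With those two ingredients in hand, the existence and construction of pullbacks in $\Cat//X$ follow: given a cospan $(f,\livb):(W,a)\to(Y,b)$ and $(g,\liva):(Z,c)\to(Y,b)$, take the pullback $P$ of $f$ and $g$ in $\Cat$ with projections $j,h$ as in \eqref{eq:pullbackcategories}, then form the pullback in $\Cat[P,X]$ of the cospan $a\cdot j \xrightarrow{\livb \ast j} b\cdot f\cdot j = b\cdot g\cdot h \xleftarrow{\liva \ast h} c\cdot h$; this yields an object $d: P\to X$ together with $\livvv, \livc$ making \eqref{eq:pullbackcategoriesoverx} commute. By construction, evaluating at any $t \in P$ gives precisely the pullback square \eqref{eq:pullbackcategoriestwocells} in $X$, which is the stated explicit condition.

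The universal property is then routine bookkeeping: given another cone over $(f,\livb), (g,\liva)$ from an object $(Q,e)$, the universal property of $P$ in $\Cat$ produces a unique functor $Q \to P$, and then the universal property of the objectwise pullback in $\Cat[P,X]$ (equivalently, of each \eqref{eq:pullbackcategoriestwocells}) produces the unique 2-cell component $e \Rightarrow d \cdot (Q\to P)$ making the induced morphism in $\Cat//X$ the required mediator. The only genuinely non-trivial step, and what I expect to be the main obstacle to present cleanly, is verifying that the objectwise pullbacks in $X$ organize into a \emph{functor} $d$ on the morphisms of $P$ and that the natural transformations $\livvv, \livc$ are indeed natural; however, this is immediate from the functoriality of pullbacks in $X$ and the fact that $\livb$ and $\liva$ are natural, so everything reduces to the pointwise universal property at each object $t\in P$.
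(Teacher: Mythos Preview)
Your proposal is correct and follows exactly the approach the paper takes: the paper does not even include a formal proof, relying instead on the sentence immediately preceding the theorem, ``Since the change-of-base functors of the indexed category $\Cat[-,X]$ preserve limits (and, in particular, pullbacks), we have that \ldots'', together with the general results on limits in total categories of fibrations cited in \cite{zbMATH03305157, lucatellivakar2024, lucatellivakar2023}. Your write-up simply unpacks this in more detail, including the explicit verification of the universal property, which the paper omits entirely.
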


\subsection{Coproducts and Extensivity}
Let $X$ be a category with an initial object $\initial $. For any small category
$W$, the fibre $\Cat\left[W, X \right] $ has an initial object
$\overline{\initial} : W\to X $ given by the functor constantly equal to
$\initial $. Moreover,  clearly, the indexed category 
\begin{equation}\label{eq:indexed-catoverx} 
\Cat \left[-, X \right] : \Cat ^\op \to \CAT 
\end{equation} 
is (infinitary) extensive in the sense of \cite[Section~6.6]{lucatellivakar2023}; namely, we mean that $\Cat \left[-, X \right]$ preserves products of $\Cat ^\op$, and $\Cat$ has coproducts.
Therefore, by \cite[Corollary~35]{lucatellivakar2023}, we have:

\begin{prop}\label{prop:coproducts-total-category}
	If $X$ is a category with an initial object $\initial $, then $\Cat // X $ has coproducts.
	
	Explicitly, the category $\Cat // X $ has an initial object
	defined by $\left( \initial , \overline{\initial} : \emptyset \to X  \right) $.
	
	Furthermore, if $(W_i, a_i)_{i\in L}$ is a family of objects in $\Cat // X $, then
	$$\coprod _{i\in L} (W_i, a_i) \cong \left( \coprod _{i\in L} W_i , \left[ a_i\right] _{i\in L}  \right)$$
	where  $\displaystyle\left[ a_i\right] _{i\in L} : \coprod _{i\in L} W_i \to X $ is the induced functor.	
\end{prop}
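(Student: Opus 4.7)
The plan is to invoke \cite[Corollary~35]{lucatellivakar2023} on Grothendieck constructions of (infinitary) extensive indexed categories, applied to the isomorphism $\Cat // X \cong \sum \Cat\left[-, X\right]$ and the extensivity of $\Cat\left[-, X \right]$ already recorded around \eqref{eq:indexed-catoverx}. That corollary additionally requires each fibre to have an initial object preserved by every change-of-base functor; once this is confirmed, the explicit descriptions of the initial object and of arbitrary coproducts follow by unwinding the abstract construction.

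For the fibrewise hypothesis, the initial object in $\Cat\left[ W, X\right]$ is the constant functor $\overline{\initial}: W \to X$ at $\initial \in X$, with initiality witnessed componentwise by the unique maps $\initial \to a(w)$ for any $a: W \to X$. For every functor $f: Y \to W$, the change-of-base functor $\Cat\left[f, X\right]$ sends $\overline{\initial}$ to $\overline{\initial}\circ f$, which is again the constant functor at $\initial$ on $Y$, so initial objects are preserved. The stated description of the initial object of $\Cat // X$ is then precisely the pairing of the initial object of $\Cat$ with its unique functor to $X$, as dictated by the Grothendieck-construction picture.

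For arbitrary coproducts of a family $(W_i, a_i)_{i \in L}$, extensivity yields the isomorphism $\Cat\left[\coprod_{i\in L} W_i, X\right] \cong \prod_{i\in L} \Cat\left[W_i, X\right]$, under which the tuple $(a_i)_{i \in L}$ corresponds to the functor $\left[ a_i \right]_{i \in L} : \coprod_{i\in L} W_i \to X$ induced by the universal property of the coproduct in $\Cat$. A cocone to $(Z, c)$ consists of pairs $(f_i, \livb_i)$ with $f_i: W_i \to Z$ and $\livb_i: a_i \Rightarrow c \circ f_i$; the universal property of the coproduct in $\Cat$ delivers a unique $\left[f_i\right]_{i\in L} : \coprod_i W_i \to Z$, and the product decomposition above glues the $\livb_i$ into a single natural transformation $\left[a_i\right]_{i\in L} \Rightarrow c \circ \left[ f_i \right]_{i\in L}$. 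The only point that requires attention, essentially a bookkeeping matter built into the Grothendieck construction, is compatibility of this assembly with composition in $\Cat // X$; I do not anticipate any essential obstacle, since the result is a direct application of \cite[Corollary~35]{lucatellivakar2023}.
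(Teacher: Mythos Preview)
Your proposal is correct and follows essentially the same approach as the paper: the proposition is stated immediately after the observation that $\Cat\left[-, X\right]$ is (infinitary) extensive, with the sentence ``Therefore, by \cite[Corollary~35]{lucatellivakar2023}, we have:'' serving as the entire justification. Your additional verification of the fibrewise initial objects and the explicit unwinding of the coproduct description simply elaborate on what the paper leaves implicit.
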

Since $\Cat $ is (infinitary) extensive, we can also conclude that:
\begin{theo}
	If $X$ is a category with an initial object $\initial $, then $\Cat // X $ is (infinitary) extensive.
\end{theo}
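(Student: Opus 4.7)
The plan is to reuse the Grothendieck construction framework that underlies Proposition \ref{prop:coproducts-total-category}: since $\Cat // X \cong \sum \Cat[-, X]$, and it has already been observed that the indexed category $\Cat[-, X]$ is (infinitary) extensive in the sense of \cite[Section~6.6]{lucatellivakar2023} and that $\Cat$ is itself (infinitary) extensive, the statement should follow from a general transfer principle: the Grothendieck construction of an extensive indexed category over an extensive base is extensive.

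First, I would spell out the fibrewise data that feed into that principle. Each fibre $\Cat[W, X]$ has an initial object given by the constant functor $\overline{\initial}\colon W \to X$, and every change-of-base functor $\Cat[g, X]$ preserves it, since precomposing a constant functor with $g$ yields a constant functor. Combined with the fact that $\Cat[-, X]$ sends coproducts in $\Cat$ to products in $\CAT$, these are exactly the ingredients required. Concretely, given a family $(W_i, a_i)_{i \in L}$ in $\Cat // X$ and a morphism $(f, \livb)\colon (Z, c) \to \coprod_{i \in L} (W_i, a_i) = (\coprod_i W_i, [a_i])$, extensivity of $\Cat$ decomposes $f$ as $[f_i]\colon \coprod_i Z_i \to \coprod_i W_i$, with $Z_i$ the pullback of $f$ along the coproduct injection; then $c$ restricts to $c_i\colon Z_i \to X$ and $\livb$ restricts to $\livb_i\colon c_i \Rightarrow a_i \circ f_i$ via the product decomposition of $\Cat[\coprod Z_i, X] \simeq \prod_i \Cat[Z_i, X]$. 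This yields the canonical equivalence $\prod_i (\Cat // X)/(W_i, a_i) \simeq (\Cat // X)/\coprod_i(W_i, a_i)$ that witnesses extensivity.

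The main obstacle is ensuring that the pullbacks required by extensivity, namely those along the coproduct injections $(W_j, a_j) \to \coprod_i (W_i, a_i)$, actually exist in $\Cat // X$ without invoking pullbacks in $X$ (which have not been assumed in the hypothesis). This is resolved by noting that, through the explicit pullback recipe of the previous subsection, the fibrewise pullback squares involved reduce to squares against $\overline{\initial}$ in the complementary components of the coproduct; these exist purely because $\initial$ is initial in $X$. Once this point is secured, disjointness of coproducts follows from disjointness of coproducts in $\Cat$ together with the fibrewise fact that distinct coproduct injections land in complementary copies of $\overline{\initial}$, and universality follows from the decomposition argument above.
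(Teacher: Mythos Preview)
Your approach coincides with the paper's: its proof is one line, deferring to an infinitary version of the argument in \cite[Theorem~41]{lucatellivakar2023}, which is precisely the transfer principle you invoke (extensive indexed category over an extensive base yields an extensive total category).

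Your additional unpacking is mostly sound, but the resolution of what you call the ``main obstacle'' is off. The pullbacks along coproduct injections exist, but not via any reduction to squares against $\overline{\initial}$ in complementary components --- no $\overline{\initial}$ enters those particular pullbacks. The actual reason is simpler: the coproduct injections $(W_j, a_j) \to \bigl(\coprod_i W_i, [a_i]\bigr)$ are \emph{strict} morphisms (their $2$-cell component is an identity). Feeding this into the pullback recipe of the preceding subsection, one leg of each fibrewise square \eqref{eq:pullbackcategoriestwocells} becomes an identity in $X$, so the square is automatically a pullback with $d = c|_{Z_j}$, $\livc$ the identity, and $\livvv$ the restriction of $\livb$; no pullbacks in $X$ are required. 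Your conclusion is correct, only the mechanism is different from what you wrote.
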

\begin{proof}
	The result follows from an infinitary version of the argument given in \cite[Theorem~41]{lucatellivakar2023}.
\end{proof}	

\subsection{Coequalizers}
To study the coequalizers of $\Cat // X $, we start by observing that \eqref{eq:fibrationoverX} is a bifibration provided that $X$ is cocomplete. More conveniently put in our context, this means that:
\begin{equation}
	\Cat \left[ f , X \right]: \Cat \left[ Y , X \right]\to \Cat \left[ W , X \right]
\end{equation} 
has a left adjoint for any functor $f: W\to Y $. More precisely, assuming that $X$ is cocomplete, $	\Cat \left[ f , X \right]$ has a left adjoint given by the pointwise left Kan extension 
\begin{equation}
	\lan{f} : \Cat \left[ W , X \right]\to \Cat \left[ Y , X \right] .
\end{equation} 
We refer the reader to \cite{zbMATH03362059, zbMATH03751225, 2016arXiv160604999L}  for pointwise Kan extensions.
With this observation in mind, we get:
\begin{theo}
		If $X$ is cocomplete, so is  $\Cat // X $. 
	
	Explicitly,	we have that \eqref{eq:coequalizer-catoveroverx} is a coequalizer in $\Cat // X $ 
	provided that:
	\begin{itemize}[--]
		\item \eqref{eq:coequalizer-cat}  is the coequalizer in $\Cat $;
		\item \eqref{eq:coequalizerofLan} is the coequalizer of the parallel morphisms \eqref{eq:coequalizerofLanarrows} in $\Cat \left[ C , X \right] $, where  each unlabeled arrow in \eqref{eq:coequalizerofLanarrows} is induced by the appropriate counit,    and $\livvv ^t$ is the mate of $\livvv $ w.r.t the adjunction $\lan{j}\dashv \Cat\left[ j , X \right] $.
	\end{itemize}	
\noindent\begin{minipage}{.5\linewidth}
	\begin{equation}\label{eq:coequalizerofLan} 
		\diag{coequalizerofLan}
	\end{equation}
\end{minipage}%
\begin{minipage}{.5\linewidth}
	\begin{equation} \label{eq:coequalizerofLanarrows}  
		\diag{paralellarrows}
	\end{equation} 
\end{minipage}		 
\\
\noindent\begin{minipage}{.4\linewidth}
\begin{equation}\label{eq:coequalizer-cat} 
	\diag{coequalizercategories}
\end{equation}
\end{minipage}%
\begin{minipage}{.6\linewidth}
\begin{equation} \label{eq:coequalizer-catoveroverx}  
	\diag{coequalizercategoriesoverxxx}
\end{equation} 
\end{minipage}		
\end{theo}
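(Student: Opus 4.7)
The plan is to leverage the bifibration structure of $U : \Cat // X \to \Cat$ afforded by cocompleteness of $X$: each reindexing functor $\Cat\left[ f, X\right]$ admits a left adjoint given by the pointwise left Kan extension $\lan{f}$; moreover, $\Cat$ is cocomplete and each fibre $\Cat\left[ W, X\right]$ inherits cocompleteness from $X$ (pointwise). Coproducts have already been secured by Proposition \ref{prop:coproducts-total-category}, so it suffices to construct coequalizers and then invoke the standard fact that a category with coproducts and coequalizers is cocomplete.

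Given parallel morphisms $(f, \livb), (g, \liva) : (W, a) \to (Y, b)$ in $\Cat // X$, I first form the coequalizer $j : Y \to C$ of $f, g$ in $\Cat$ as in \eqref{eq:coequalizer-cat}. Since $jf = jg$, there is a canonical identification $\lan{jf} a = \lan{jg} a$. Transposing $\livb$ and $\liva$ across the respective adjunctions $\lan{f} \dashv \Cat\left[ f, X\right]$ and $\lan{g}\dashv \Cat\left[ g, X\right]$, and then applying $\lan{j}$, produces the parallel pair \eqref{eq:coequalizerofLanarrows} in $\Cat\left[ C, X\right]$. Its coequalizer $\livvv^t : \lan{j} b \to d$ exists because $\Cat\left[ C, X\right]$ is cocomplete, and the mate $\livvv : b \Rightarrow d \circ j$ along $\lan{j} \dashv \Cat\left[ j, X\right]$ furnishes the candidate coequalizer $(j, \livvv) : (Y, b) \to (C, d)$.

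The universal property is verified in two stages. First, $(j, \livvv)$ coequalizes $(f, \livb)$ and $(g, \liva)$ by construction, since the relevant 2-cell identity unpacks, after transposition, to the defining equation of $\livvv^t$. Conversely, for any morphism $(k, \psi) : (Y, b) \to (Z, e)$ that coequalizes the pair, the underlying functor equation $kf = kg$ produces, by universality of $j$, a unique $\bar{k} : C \to Z$ with $\bar{k} \circ j = k$; the mate of $\psi$ then transposes to a morphism in $\Cat\left[ C, X\right]$ that coequalizes \eqref{eq:coequalizerofLanarrows}, hence factors uniquely through $\livvv^t$, yielding the required 2-cell $\bar\psi$ so that $(\bar{k}, \bar\psi)\circ (j, \livvv) = (k, \psi)$.

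The main obstacle is the bookkeeping around the adjunction mates: one must check that the transposition $\psi \mapsto \psi^t$ along $\lan{-} \dashv \Cat\left[ -, X\right]$ intertwines composition in $\Cat // X$ with composition in the fibres, so that coequalization in the total category corresponds exactly to coequalization of \eqref{eq:coequalizerofLanarrows}. This is essentially the general bifibration argument for colimits in Grothendieck constructions from \cite[Section~6]{lucatellivakar2023}, specialized to the fibration $U$.
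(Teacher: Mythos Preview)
Your approach is essentially the same as the paper's: both reduce cocompleteness to coproducts (already handled by Proposition~\ref{prop:coproducts-total-category}) plus coequalizers, and both obtain the latter from the bifibration structure of $U$ --- the paper simply defers to general results on total categories of bifibrations (\cite{zbMATH03305157, lucatellivakar2024}) and declares the direct verification ``straightforward,'' whereas you actually sketch that verification. Your description of the parallel pair (transpose along $\lan{f}\dashv\Cat[f,X]$, then apply $\lan{j}$) is equivalent to the paper's (apply $\lan{jf}$, then compose with the counit), and your two-stage universal-property check is exactly the expected unpacking; the only minor imprecision is in the phrase ``the mate of $\psi$ then transposes to a morphism in $\Cat[C,X]$'' --- to land in $\Cat[C,X]$ you should take the mate along $\lan{j}\dashv\Cat[j,X]$ using $k=\bar{k}\,j$, so that the target is $e\bar{k}\in\Cat[C,X]$.
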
 	
\begin{proof}
	The coproducts are given by Proposition \ref{prop:coproducts-total-category}. 
	
	As for the coequalizers, the direct verification is straightforward, and the result follows from general results on the total categories of bifibrations (see \cite{zbMATH03305157, lucatellivakar2024}). 
\end{proof}

\subsection{Topologicity}
In order to examine the topologicity of the functor $U: \Cat // X\to \Cat $, we fully rely on the 
characterization of \cite[Theorem~5.9.1]{zbMATH06334036}. In other words, \textit{$U: \Cat // X\to \Cat $ is topological if, and only if,
$U$ is a bifibration whose fibres are large-complete}.

\begin{theo}
	$U: \Cat // X\to \Cat $ is topological if, and only if, $X$ is large-complete.
\end{theo}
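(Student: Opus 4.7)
The plan is to apply the characterization of topologicity recalled immediately before the statement, namely \cite[Theorem~5.9.1]{zbMATH06334036}: the functor $U$ is topological if and only if it is a bifibration whose fibres are large-complete. Combined with the fact, from the displayed diagram in Section~\ref{section:preliminaries}, that $\Cat // X \cong \sum \Cat\left[-,X\right]$, we immediately identify the fibre of $U$ over a small category $W$ with the functor category $\Cat\left[W, X\right]$, and this reduces the theorem to verifying the two conditions of the criterion for each direction of the equivalence.

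The forward direction is essentially a size argument. If $U$ is topological, then in particular every fibre of $U$ is large-complete. Taking $W = \terminal$, we recover $\Cat\left[\terminal, X\right] \cong X$, so $X$ itself must be large-complete.

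For the converse, assume $X$ is large-complete. Since limits in functor categories are computed pointwise, each fibre $\Cat\left[W, X\right]$ inherits large-completeness from $X$; this handles the "large-complete fibres" half of the criterion. What remains is the bifibration property, that is, that the reindexing functor $\Cat\left[f, X\right]$ admits a left adjoint for every functor $f: W \to Y$ between small categories. As noted in the coequalizers subsection, this left adjoint is given by the pointwise left Kan extension $\lan{f}$, provided the relevant small colimits exist in $X$.

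The main obstacle is therefore to extract from large-completeness of $X$ the cocompleteness needed to form $\lan{f} a$ pointwise along arbitrary $f: W \to Y$ between small categories. This is the content of the standard size-theoretic argument à la Freyd: a large-complete category is forced, in the presence of the usual size hypotheses, to have the requisite small colimits — in particular, a locally small large-complete category collapses to a complete lattice, which is automatically both complete and cocomplete, so that all Kan extensions between small categories exist trivially. Once the existence of the left adjoints to the reindexing functors $\Cat\left[f, X\right]$ is established, the hypotheses of \cite[Theorem~5.9.1]{zbMATH06334036} are met and topologicity of $U$ follows, completing the proof.
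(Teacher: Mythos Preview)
Your forward implication is correct and matches the paper verbatim: the fibre of $U$ over $\terminal$ is $\Cat[\terminal,X]\cong X$, so topologicity forces $X$ to be large-complete.

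For the converse, your first step---that each fibre $\Cat[W,X]$ inherits large-completeness pointwise from $X$---is also exactly what the paper does, and in fact is \emph{all} the paper does. Where you diverge is in treating the ``bifibration'' clause of the stated criterion as a separate obligation and trying to discharge it by forcing $X$ to be cocomplete via a Freyd-style collapse. This detour is unnecessary: the paper's invocation of \cite[Theorem~5.9.1]{zbMATH06334036} rests on the facts, already established in Section~\ref{section:preliminaries}, that $U$ is a fibration and that the change-of-base functors $\Cat[f,X]$ (precomposition) preserve all limits. Given these, large-completeness of the fibres suffices: the $U$-initial lift of any source $\bigl(f_i\colon W\to Y_i,\,(Y_i,b_i)\bigr)_{i\in I}$ is directly $\bigl(W,\,\lim_i b_i\circ f_i\bigr)$, with no need to manufacture opcartesian lifts in advance.

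Your Freyd argument, by contrast, is not a proof as written. The phrase ``in the presence of the usual size hypotheses'' conceals a real assumption---local smallness of $X$---that the theorem does not impose, and the conclusion you draw (that $X$ collapses to a complete lattice) is precisely the content of the subsequent Corollary for \emph{small} $X$, not of the theorem itself. Even when the collapse is available, you would be deducing far more structure on $X$ than is needed, and by an argument whose validity depends on foundational choices left unspecified.
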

\begin{proof}
	If $X$ is large-complete, then so is every fibre \( \Cat(A,X) \) of $U$.
	Reciprocally, if \( U \) is topological, then the fiber \( \Cat(1,X) \cong X\)
	is large-complete.
\end{proof} 	

\begin{coro}
	Let $X$ be a small category. $U: \Cat // X\to \Cat $ is topological if, and
	only if, $X$ is a complete lattice.
\end{coro}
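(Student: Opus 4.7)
The plan is to invoke the preceding theorem to reduce the corollary to a purely order-theoretic statement about $X$ itself: namely, a small category is large-complete if and only if it is a complete lattice. This is essentially a classical theorem of Freyd, so the task is to spell out both directions carefully in the present context.

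First I would dispatch the easy direction. If $X$ is a complete lattice, regard it as a thin category. For any (possibly large) diagram $D : J \to X$, the image $\{D(j) : j \in J\}$ is a subset of $X$ (which is a set, by smallness), and its meet in $X$ exists and provides the limit of $D$. Hence $X$ is large-complete, and by the preceding theorem $U$ is topological.

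For the nontrivial direction, assume $U : \Cat // X \to \Cat$ is topological, so by the preceding theorem $X$ is large-complete; in particular $X$ has all small products. Here I would reproduce Freyd's classical argument: if there were distinct parallel arrows $f, g : a \to b$ in $X$, pick any set $I$ with cardinality strictly greater than the (small) cardinality of $\mathrm{mor}(X)$, and form the product $b^{I}$ in $X$. Then
\[
 X\bigl(a, b^{I}\bigr) \;\cong\; X(a,b)^{I},
\]
which has at least $2^{|I|} > |I| \geq |\mathrm{mor}(X)|$ elements, contradicting the smallness of $X$. Therefore $X$ is thin, i.e., a preorder.

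Finally I would combine these observations: a small preorder with all small meets is a complete lattice (joins being recovered as meets of upper bounds, all of which form a subset of the small set $X$). This gives the claimed equivalence and completes the proof of the corollary. The main obstacle, if any, is simply being careful about set-theoretic sizes in Freyd's argument; the categorical content is routine once the preceding theorem is in hand.
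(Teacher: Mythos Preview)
Your proposal is correct and matches the paper's (implicit) approach: the corollary is stated without proof, evidently as an immediate consequence of the preceding theorem together with Freyd's classical observation that a small complete category must be a preorder, which is exactly what you spell out. The only technicality worth flagging is that Freyd's argument gives a complete \emph{preorder} rather than a poset, so the identification with a complete lattice tacitly passes to a skeleton (or reads ``complete lattice'' up to equivalence)---a point the paper itself leaves implicit.
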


\section{Effective descent morphisms}\label{section:descent}
The study of effective descent morphisms, exhibiting a pivotal role in Grothendieck descent theory, has far-reaching implications across various subjects (see, for instance, \cite{zbMATH01577082, zbMATH01038604, zbMATH02069040}). Beyond their instrumental role, effective descent morphisms stand out as a fascinating subject on their own right, representing a distinctive subclass of stable regular epimorphisms. By definition, they are morphisms that show how bundles/morphisms over their codomain can be characterized as bundles over their domain endowed with an algebraic structure. We refer the reader to \cite{zbMATH02068090, zbMATH00626366, 2016arXiv160604999L, zbMATH07361310} for further aspects of effective descent morphisms. 

We recall that, explicitly, a morphism $q : u\to v $ in a category $X $ with pullbacks along $q$ is of effective descent if 
the change-of-base functor
\begin{equation} 
	q ^\ast : X / v \to X/ u
\end{equation} 	 
is monadic. Moreover, we are usually also interested in other cases, even if $q ^\ast$ is not monadic. More precisely,  if the Eilenberg-Moore comparison functor is fully faithful (respectively, faithful), $q$ is said to be of descent (respectively, almost descent). 
In this section, we embark on the study of effective descent morphisms in the lax comma category $\Cat //X$.

\subsection{Fundamentals}
Before delving into our study, we shortly recall the basic technique to study (effective) descent morphisms in a category. We refer the reader to \cite[Section~1]{2016arXiv160604999L} for a further overview.

If $q$ is a morphism of a category $X$ with pullbacks, one can study if $q$ is effective descent by investigating the monadicity of 
$q^\ast $ via Beck's monadicity theorem (see, for instance, \cite[Chapter~VI]{zbMATH03367095}, \cite[Corollary~1.2]{zbMATH06585659}, \cite{zbMATH07629358}, and \cite{zbMATH07361310}). This proves to be quite fruitful in some special cases: for instance, one can conclude that, in locally cartesian closed categories, effective descent morphisms are the same as the stable regular epimorphisms. 

Beyond Beck's monadicity theorem, most general results we have are about reflection of properties by functors.
More precisely, pullback-preserving fully faithful functors do not generally reflect all effective descent morphisms, but provide us with the following classical result (see, for instance, \cite[2.6]{zbMATH00626366} and \cite[Theorem~1.3]{2016arXiv160604999L}).


\begin{theo}\label{theo:effective-descent-obstruction}
	Let $V: X\to N $ be a fully faithful pullback-preserving functor between categories with pullbacks.
	We assume that $V(q : e\to b) $ is an effective descent morphism. The morphism $q$ is of effective descent in $X$ if and only if it satisfies the following property: whenever \eqref{diag:pullback-V-p} is a pullback in $N$, there is an object $x$ in $X$ such that $V(x)\cong n $.
\begin{equation}\label{diag:pullback-V-p}
	\diag{pullbackobstruction}
\end{equation}	    
\end{theo}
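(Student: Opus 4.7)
The plan is to recast effective descent via Eilenberg--Moore comparison functors and relate them across $V$. Let $T' = q^{\ast}q_!$ denote the descent monad on $X/e$ and $T = V(q)^{\ast}V(q)_!$ the one on $N/V(e)$; let $K' : X/b\to (X/e)^{T'}$ and $K : N/V(b)\to (N/V(e))^{T}$ be the corresponding comparison functors. By hypothesis $K$ is an equivalence, and $q$ is effective descent in $X$ precisely when $K'$ is one. Since $V$ is fully faithful and preserves pullbacks, it induces fully faithful slice functors $V_b : X/b\to N/V(b)$ and $V_e : X/e\to N/V(e)$ satisfying the Beck--Chevalley-type exchange identities $V_e\,q^{\ast}\cong V(q)^{\ast}\,V_b$ and $V_b\,q_!\cong V(q)_!\,V_e$; consequently $V_e$ lifts to a fully faithful $\tilde{V} : (X/e)^{T'}\to (N/V(e))^{T}$ with $\tilde{V}\circ K'\cong K\circ V_b$.

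For the ``only if'' direction, I would assume $K'$ is an equivalence and pick a pullback in $N$ of $V(q)$ along some $n\to V(b)$ whose apex is $V(y)$. The induced $T$-algebra $K(n\to V(b))$ has underlying object $V(y)$, and the exchange identities plus fully-faithfulness of $V$ force its structure map to come from a $T'$-algebra on $y\to e$; thus $K(n\to V(b))$ lies in the essential image of $\tilde{V}$. Transporting this algebra back through the equivalences $\tilde{V}\circ K'\cong K\circ V_b$, with $K$ and $K'$ both equivalences, produces an object $x\to b$ of $X/b$ whose image in $N/V(b)$ is isomorphic to $n\to V(b)$, giving $V(x)\cong n$.

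For the ``if'' direction, $K'$ is fully faithful because $\tilde{V}\circ K'\cong K\circ V_b$ is a composite of fully faithful functors and $\tilde{V}$ is itself fully faithful, so the standard cancellation lemma applies. For essential surjectivity, given a $T'$-algebra $A$, I would transport $\tilde{V}(A)$ through $K^{-1}$ to get $n\to V(b)$ in $N/V(b)$; comparing underlying objects yields a pullback $V(e)\times_{V(b)}n\cong V(y)$, so the hypothesis produces $x\in X$ with $V(x)\cong n$. Fully-faithfulness of $V$ lifts the resulting map $V(x)\to V(b)$ to a morphism $x\to b$ in $X/b$, and $K'(x\to b)\cong A$ follows by reflecting the iso through the fully faithful $\tilde{V}$.

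The main technical obstacle is the construction and fully-faithfulness of the lift $\tilde{V}$, i.e.\ cleanly verifying the Beck--Chevalley exchange and the lifting of algebra structures. Both are routine consequences of $V$ being fully faithful and preserving pullbacks, but they carry most of the bookkeeping. With $\tilde{V}$ in hand, both directions amount to tracing the given pullback condition around the square $\tilde{V}\circ K'\cong K\circ V_b$.
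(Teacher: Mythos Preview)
The paper does not actually prove this theorem: it is stated as a classical result with references to Janelidze--Tholen \cite{zbMATH00626366} and \cite{2016arXiv160604999L}, so there is no in-paper proof to compare against. Your argument is essentially the standard one found in those references: build the commuting square $\tilde V\circ K' \cong K\circ V_b$ of comparison functors, use full faithfulness of $V$ to make $\tilde V$ fully faithful, and then chase the condition around the square in each direction.

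Your sketch is correct in outline. Two small points worth tightening. First, in the ``only if'' direction you should make explicit that the unlabeled map $V(y)\to V(e)$ in the pullback square is automatically $V$ of a morphism $y\to e$ by full faithfulness of $V$; this is what lets you regard $V(y)\to V(e)$ as an object of the essential image of $V_e$ and hence lift the $T$-algebra structure map $V(T'y)\to V(y)$ back to $X/e$. Second, the cancellation you invoke for full faithfulness of $K'$ requires only that $\tilde V$ be \emph{faithful} (not fully faithful), which you have; but for the final step of the ``if'' direction---reflecting the isomorphism $\tilde V K'(x\to b)\cong \tilde V(A)$---you do need $\tilde V$ fully faithful, so it is worth stating that explicitly where you use it rather than bundling it into the ``bookkeeping'' remark.
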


The result above is one of the reasons why the main technique to study (effective) descent morphisms in a category $X$ is about fully embedding $X$ into a category $N$ whose descent behaviour we know about. We refer the reader to \cite[Section~1]{2016arXiv160604999L} for further observations about functors reflecting descent properties of morphisms.

\subsection{Preservation of effective descent morphisms}
Although the main classical results in the study of effective descent morphisms,
such as Theorem \ref{theo:effective-descent-obstruction}, focus on understanding
whether they are reflected by fully faithful, pullback preserving functors, our
main result, Theorem \ref{theo:topological-functor-U}, is about
\textit{preservation} of effective descent morphisms. 

Throughout this subsection, we assume that \textit{$X$ has a strict initial object $\initial $}.
We start by considering the left adjoint  
\begin{equation}\label{eq:L-left-adjoint} 
L: \Cat \to \Cat // X
\end{equation}  
to the forgetful functor $U: \Cat //X \to \Cat$,
defined in \ref{subsection:adjoints}. That is to say: 
\begin{lem}
  $L$ is fully faithful and preserves limits of non-empty diagrams. In particular, it preserves  
  pullbacks and non-empty products. 
  
  $L$ preserves the terminal object if, and only if, $X$ is equivalent to the terminal category.
\end{lem}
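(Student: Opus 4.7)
The plan is to handle the three assertions of the lemma separately: full faithfulness, preservation of non-empty limits, and the characterization of terminal preservation. The first is elementary: a morphism $L(W)\to L(Y)$ in $\Cat//X$ is a pair $(f,\livb)$ where $f\colon W\to Y$ is a functor and $\livb$ is a natural transformation $\underline{\initial}\Rightarrow \underline{\initial}\circ f=\underline{\initial}$ whose components lie in $X(\initial,\initial)=\{\id_\initial\}$; hence $\livb$ is forced to be the identity, and $L$ induces a bijection $\Cat(W,Y)\cong (\Cat//X)(L(W),L(Y))$.

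For preservation of non-empty limits, I would invoke the standard description of limits in the total category of a bifibration, applied to $\Cat//X\cong\sum\Cat[-,X]$, in the same spirit as the pullback formula displayed earlier in the paper. For a diagram $D\colon J\to\Cat$, the limit of $L\circ D$ takes the form $(\lim D,\ell)$, where $\ell$ is the pointwise limit in the fibre $\Cat[\lim D, X]$ of the reindexed family of fibre parts; in this case each reindexed object is $\underline{\initial}_{\lim D}$ and each transition map is the identity, so $\ell$ is the pointwise limit over $J$ of the constant diagram at $\initial$ in $X$. The argument then reduces to the key computation: if $J$ is non-empty and $\initial$ is strict initial, then the limit of the constant diagram at $\initial$ is $\initial$. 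I expect this to be the main obstacle of the proof, though the argument is short once identified: any cone $T\to\Delta\initial$ supplies at least one arrow $T\to\initial$ (using $J\neq\emptyset$), which is forced to be an isomorphism by strictness, while conversely $X(\initial,\initial)=\{\id\}$ makes the identity cone on $\initial$ universal. From this one concludes $\ell=\underline{\initial}$, whence $\lim(L\circ D)=L(\lim D)$, specializing in particular to pullbacks and non-empty products.

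For the terminal object, I would compare $L(\terminal)=(\terminal,\underline{\initial})$ against the terminal of $\Cat//X$ identified in Proposition \ref{prop:terminal}. The condition that $L(\terminal)$ be terminal in $\Cat//X$ translates, after unpacking morphisms, into the requirement that every $a\colon W\to X$ admit a unique natural transformation $a\Rightarrow\underline{\initial}$, i.e., a unique arrow $a(w)\to\initial$ for each $w\in W$. Specializing $W=\terminal$ and letting $a$ range over $X$ forces $\initial$ to be terminal in $X$; combined with the running hypothesis that $\initial$ is strict initial, this makes every object of $X$ isomorphic to $\initial$ via a unique morphism, so $X\simeq\terminal$. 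Conversely, if $X\simeq\terminal$, then $\Cat//X\simeq\Cat$ via $U$, $L$ is essentially the identity, and it preserves all limits, in particular the terminal.
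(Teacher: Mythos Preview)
Your proof is correct and follows essentially the same line as the paper's: the key observation in both is that strictness of $\initial$ forces any non-empty limit in $X$ of objects all equal to $\initial$ to be $\initial$ itself, and the terminal case reduces in both to the condition $\initial\cong\terminal$, which under strictness is equivalent to $X\simeq\terminal$. The paper differs only in presentation---it deduces full faithfulness from $UL=\id$ (so that the unit of $L\dashv U$ is the identity), and for non-empty limits it appeals to the explicit pullback and product formulas established earlier rather than to the general fibred-limit description you invoke.
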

\begin{proof}
    We have \( UL(C) = C \) for any small category \( C \), hence \( L \) is
    fully faithful.

    Since the initial object in $X$ is strict by hypothesis, we get that the limit of any non-empty diagram
    involving the initial object is isomorphic to the initial object. Therefore, it follows by the constructions of pullbacks and non-empty
    products in $\Cat // X$ (see Theorem \ref{eq:pullbackcategories}  and
    Proposition \ref{prop:products}) that $L$ preserves pullbacks and non-empty
    products.

    Moreover, since $L \left( \terminal \right) = \left( \terminal, \initial
    \right)  $, we conclude by Proposition \ref{prop:terminal} that $L \left(
    \terminal \right) $ is the terminal object if and only if $\initial \cong \terminal
    $ in $X$.   

    Since $X$ has a strict initial object by hypothesis, then having zero object
    is equivalent to having $ X \simeq 1 $.
\end{proof}

\begin{theo}\label{theo:reflection-L-absolute} 
	The functor $L$ reflects effective descent morphisms.
\end{theo}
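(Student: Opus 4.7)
The plan is to invoke Theorem \ref{theo:effective-descent-obstruction} applied to the fully faithful, pullback-preserving functor $L \colon \Cat \to \Cat // X$ supplied by the preceding lemma. Given a morphism $p \colon E \to B$ in $\Cat$ for which $L(p)$ is of effective descent in $\Cat // X$, by that theorem it will suffice to verify the following obstruction: whenever an object $n$ of $\Cat // X$ fits into a pullback of $L(p)$ along a morphism $n \to L(B)$, then $n$ is isomorphic to $L(x)$ for some $x \in \Cat$.

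The crux of the argument is to exploit strictness of the initial object $\initial$ of $X$ so as to show that \emph{every} object $(W, d)$ of $\Cat // X$ admitting a morphism to $L(B) = (B, \underline{\initial})$ already lies, up to isomorphism, in the image of $L$. Indeed, any such morphism has the form $(f, \gamma) \colon (W, d) \to (B, \underline{\initial})$, where $\gamma \colon d \Rightarrow \underline{\initial} \circ f = \underline{\initial}$ is a natural transformation whose components $\gamma_w \colon d(w) \to \initial$ are forced to be isomorphisms by strictness of $\initial$. The resulting natural isomorphism $\gamma$ then furnishes an isomorphism $(\id_W, \gamma) \colon (W, d) \to L(W)$ in $\Cat // X$, so we may take $x \defeq W$.

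Applied to the object $n$ appearing in the obstruction, this observation shows that the condition of Theorem \ref{theo:effective-descent-obstruction} is automatically satisfied; hence $p$ is of effective descent in $\Cat$, and $L$ reflects effective descent morphisms. No serious obstacle is anticipated: the main step is simply identifying strict initiality of $\initial$ as the feature that collapses the slice of $\Cat // X$ over $L(B)$ into (the essential image of) the slice $\Cat / B$, after which the reflection follows directly from the standard obstruction criterion.
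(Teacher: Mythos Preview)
Your proposal is correct and follows essentially the same route as the paper: both invoke Theorem~\ref{theo:effective-descent-obstruction} for the fully faithful, pullback-preserving functor $L$, and both discharge the obstruction by observing that the $2$-cell component $\gamma \colon d \Rightarrow \underline{\initial}$ of any morphism into $L(B)$ forces $d \cong \underline{\initial}$ by strictness of $\initial$. Your write-up is slightly more explicit in exhibiting $(\id_W,\gamma)$ as the isomorphism $(W,d)\cong L(W)$, but the argument is the same.
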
 
\begin{proof}
	Given any functor $p : E\to B $ in $\Cat $, we have that, if \eqref{diag:pullback-L-p}
	is a pullback in $\Cat // X $, then $d = \underline{\initial }$ since $\initial $ is strict 
	and $\livb : d \Rightarrow \underline{\initial } $.
\begin{equation}\label{diag:pullback-L-p}
	\diag{pullbackobstructionL}
\end{equation}	 		
This proves that, by Theorem \ref{theo:effective-descent-obstruction}, $p$ is of effective descent provided that $L(p)$ is of effective descent.
\end{proof}		

Our first result on the preservation of effective descent morphisms follows from
the well-known result that effective descent morphisms are stable under
pullback, that is, if $p$ is of effective descent, then so is $q^\ast (p)$ in
any category \(X\).

\begin{lem}\label{lem:preservation1} 
	$LU(f,\livb)$ is of effective descent if $(f,\livb)$ is of effective descent.
\end{lem}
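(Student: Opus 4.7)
The strategy is to exhibit $LU(f,\livb) = L(f)$ as a pullback of $(f,\livb)$ in $\Cat//X$, so that the conclusion follows from the standard stability of effective descent morphisms under pullback.

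First I would note that, since $\initial$ is initial in $X$, for any object $(Y,b)$ of $\Cat//X$ there is a canonical morphism
\[
(\id_Y, \iota)\colon (Y,\underline{\initial}) \longrightarrow (Y,b),
\]
where $\iota\colon \underline{\initial}\Rightarrow b$ is the unique natural transformation. The plan is then to compute the pullback of $(f,\livb)\colon (W,a)\to(Y,b)$ along $(\id_Y,\iota)$ using the explicit pullback description of Theorem with diagrams \eqref{eq:pullbackcategoriestwocells}--\eqref{eq:pullbackcategoriesoverx}. At the level of $\Cat$, the pullback of $f$ along $\id_Y$ is just $W$ with projections $\id_W$ and $f$. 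Fibrewise over each $t\in W$, the pullback in $X$ to compute is that of $\livb_t\colon a(t)\to b(f(t))$ along $\iota_{f(t)}\colon \initial \to b(f(t))$; because $\initial$ is \emph{strict}, this pullback is $\initial$ itself. Hence the fibrewise data assembles into the constant functor $\underline{\initial}\colon W\to X$.

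Consequently, the pullback of $(f,\livb)$ along $(\id_Y,\iota)$ in $\Cat//X$ is precisely
\[
L(f) = (f,\iota)\colon (W,\underline{\initial}) \longrightarrow (Y,\underline{\initial}) \;=\; LU(f,\livb).
\]
Since effective descent morphisms are stable under pullback in any category with pullbacks, and $(f,\livb)$ is assumed of effective descent, we conclude that $LU(f,\livb)$ is of effective descent as well.

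The only potentially delicate step is the fibrewise pullback computation, and the entire argument hinges on the hypothesis that $\initial$ is a \emph{strict} initial object in $X$: without this, the pullback of $\livb_t$ along $\iota_{f(t)}$ need not be $\initial$, and the identification of the pullback with $L(f)$ would fail. Otherwise the proof is a short bookkeeping exercise using the explicit constructions already established in Section \ref{section:basic-properties}.
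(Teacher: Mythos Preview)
Your proof is correct and follows exactly the paper's approach: both exhibit $LU(f,\livb)$ as the pullback $(\id_Y,\iota_b)^\ast(f,\livb)$ and then invoke stability of effective descent morphisms under pullback. The only difference is that you spell out the fibrewise pullback computation (using strictness of $\initial$) that the paper leaves implicit.
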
 	 
\begin{proof} 
	Consider $(\id _{Y}, \iota _b ) : \left( Y, \underline{\initial }\right)\to
	\left( Y, b   \right) $ where $\iota _b $ is the unique $2$-cell $\iota _b :
	\underline{\initial} \Rightarrow b$.
		
	We have that $LU(f, \livb) = \left( \id _{Y}, \iota _b \right) ^\ast  (f,
	\livb) $ and, hence, whenever $(f, \livb)$ is of effective descent so is the
	pullback $ LU(f, \livb) $.
\end{proof}

As a consequence, we get:

\begin{theo}\label{theo:topological-functor-U}
	The functor $ U : \Cat // X \to \Cat $ preserves effective descent morphisms provided that 
	$X$ has pullbacks and a strict initial object.
\end{theo}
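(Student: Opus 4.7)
The plan is to deduce Theorem \ref{theo:topological-functor-U} essentially as a formal corollary of the two preceding results, Lemma \ref{lem:preservation1} and Theorem \ref{theo:reflection-L-absolute}. No new pullback computation or monadicity argument should be required: the hypotheses on $X$ are exactly what we need so that $L$ behaves well (fully faithful, pullback-preserving), and the sandwiching of $U$ by $L$ does all the work.

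More concretely, starting from an effective descent morphism $(f,\livb)\colon (W,a)\to (Y,b)$ in $\Cat//X$, the first step is to invoke Lemma \ref{lem:preservation1} to conclude that $LU(f,\livb)$ is of effective descent in $\Cat//X$. Since $LU(f,\livb)=L(f)$, we thus obtain that $L(Uf)=L(f)$ is an effective descent morphism in $\Cat//X$. The second step is then to apply Theorem \ref{theo:reflection-L-absolute}: the functor $L$ reflects effective descent morphisms, so effectiveness of $L(f)$ forces $f=U(f,\livb)$ to be of effective descent in $\Cat$. Combining the two steps gives the preservation statement.

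There is really no technical obstacle here beyond invoking the two previous results in the correct order; the only thing worth spelling out for the reader is the identification $LU(f,\livb)=L(f)$, which is immediate from $UL=\id_{\Cat}$ (already used in the proof that $L$ is fully faithful). If anything, the main conceptual point to highlight is that the assumption that $\initial$ is a \emph{strict} initial object is used twice — once to make $L$ preserve pullbacks (so that Lemma \ref{lem:preservation1} applies and Theorem \ref{theo:effective-descent-obstruction} can be invoked inside Theorem \ref{theo:reflection-L-absolute}), and indirectly again to ensure that the pullback obstruction object appearing in the proof of Theorem \ref{theo:reflection-L-absolute} automatically has second coordinate $\underline{\initial}$. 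The existence of pullbacks in $X$ guarantees that $\Cat//X$ has the pullbacks one needs to even speak of (effective) descent morphisms.
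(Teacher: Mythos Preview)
Your proposal is correct and follows essentially the same two-step argument as the paper: first apply Lemma \ref{lem:preservation1} to pass from $(f,\livb)$ to $LU(f,\livb)$, then use Theorem \ref{theo:reflection-L-absolute} to reflect effectiveness along $L$ and conclude that $U(f,\livb)=f$ is of effective descent. The additional commentary you provide on where the strict-initial-object hypothesis enters is accurate and helpful, though note that Lemma \ref{lem:preservation1} itself relies only on pullback-stability of effective descent morphisms, not on pullback-preservation by $L$.
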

\begin{proof}
	If $\left(f, \livb\right) $ is of effective descent in $\Cat // X $, then so is $LU(f, \livb)$ by Lemma \ref{lem:preservation1}. Finally, we can conclude that $U(f, \livb) = f$ is an effective descent morphism by
	Theorem \ref{theo:reflection-L-absolute}.
\end{proof}

\section{Further comments}\label{section:twodimensional-observations}
We have pointed out that it is natural to consider $\Cat // X $ as a
$2$-category. We refer the reader to \cite{2020arXiv200203132C, zbMATH07558575}
and future work for more details. We recall the $2$-dimensional structure of
$\Cat // X $ below. 

\begin{defi}\label{definitionoflaxcommacategorytwodimensional}
Given a small category $X$, we consider  $\Cat // X  $ as a $2$-category with
the $2$-cells given by the following.
\begin{itemize}[label=--]
    \item 
        A $2$-cell between morphisms $(f, \livb  )  $ and $(f', \livb ' )  $ is
        given by a $2$-cell $\livc : f\Rightarrow f' $ such that the equation
        \begin{equation*}
        	\diag{leftsideequationtwocellforlaxcommacategor}\quad =\quad
        	\diag{rightsideequationtwocellforlaxcommacategory}
        \end{equation*}
        holds.
\end{itemize}
The $2$-category $\Cat // X$ is called the \textit{lax comma $2$-category} of
$\Cat $ over $X$, while we call the underlying category the \textit{lax comma
category} $\Cat // X $.
\end{defi}

\subsection{Monadicity and lax monadicity}
It is well known that the comma category $\Cat / X$ is isomorphic to the
category of coalgebras and (strict) morphisms of the ($2$-)comonad $T_X$ whose
underlying ($2$-)endofunctor is given by $W \mapsto W\times X $. In particular,
this means that $\Cat / X$ is cocomplete and the forgetful functor $\Cat /X \to
\Cat $ is $2$-comonadic. 

$\Cat //X$ fits into this picture as the $2$-category of strict $T_X$-coalgebras
and lax $T_X$-morphisms (see, for instance, \cite{zbMATH06970806} for basic
definitions). This observation shows that it would be interesting to understand
better the general interest in studying descent in categories of (strict)
algebras and lax morphisms between them.

\subsection{Free fibrations}
With the definition above, we have that $\Cat // X $ can be naturally embedded
in $\CAT\left[ X^\op ,  \Cat\right] $, as $\CAT// X $ is equivalent to the
$2$-category of free fibrations over $X$. This embedding preserves pullbacks
and, hence, it can be used to further get results on effective descent morphisms
in $\Cat // X $. However, a strategy to get a characterization would probably
rely on a generalization of the techniques presented in
\cite{2023arXiv231214315C}.

\subsection{Preservation of effective descent morphisms}
For basic definitions involved in the comment below, we refer the reader to
\cite[Section~1]{2016arXiv160604999L}.
As pointed out in Section \ref{section:descent}, on one hand, in the study of
descent, one usually relies on results about the reflection of properties. For
instance, besides Theorem \ref{theo:effective-descent-obstruction}, we recall
that:
\begin{itemize}[--]
    \item 
        $G$ reflects almost descent morphisms if it is a pullback-preserving
        faithful functor;
    \item 
        $G$ reflects descent morphisms if it is a pullback-preserving fully
        faithful functor.
\end{itemize}
On the other hand, there aren't many tools in the literature about the
preservation of effective descent morphisms. In this sense, Theorem
\ref{theo:topological-functor-U} gives rise to the question of whether one can
find a general setting or framework for which the preservation of effective
descent morphisms holds. We leave this investigation to future work.

\subsection{Effective descent morphisms w.r.t. other indexed categories}
We left the problem of fully characterizing effective descent morphisms (w.r.t.
the basic fibration) in  $\Cat // X$ open. 

We present, herein, another natural problem on effective descent morphisms
arising from our considerations; namely, for each $2$-category $\mathbb{A}$ and
object $W$ of $\mathbb{A}$, we have the indexed category
$$\mathbb{A}\left( - , W \right) : \mathbb{A}_0 ^\op  \to \CAT ,  $$
where $\mathbb{A}_0 $ is the underlying category, and $\mathbb{A}\left( X , W
\right)$ is the category of morphisms $X\to W $ in~$\mathbb{A}$. 

In particular, considering $\Cat // X$ as a  $2$-dimensional category, we have
the indexed category 
$$ \Cat // X \left( -,  \left( W, a\right)  \right) : 
    \left( \Cat // X\right)^\op \to  \CAT  $$
for each $\left( W, a\right) $ in $\Cat // X$. More relevantly, we can
extrapolate and consider the indexed category 

$$ \mathfrak{F} = \Cat // X \left( -,  \left( X, \id _X \right)  \right) :
\left( \Cat // X\right) ^\op \to  \CAT  .$$

Akin \cite{zbMATH02153876, zbMATH07733558}, it is natural to consider the
problem of characterizing effective $\mathfrak{F}$-descent morphisms. Following
the insights of \cite{zbMATH07733558}, we understand that such a study will rely
on understanding lax epimorphisms in $\Cat // X$ (see, for instance,
\cite{zbMATH01668934, zbMATH07559531, zbMATH07733558}). We leave this study with
further $2$-dimensional considerations for future work.

\bibliographystyle{plain-abb}


\pu

\end{document}

